\newtheorem{exm} [subsection]{Example}
\newtheorem{prop}[subsection]{Proposition}
\newtheorem{teor}[subsection]{Theorem}
\newtheorem{lema}[subsection]{Lemma}
\newtheorem{cor} [subsection]{Corollary}
\newcommand{\Zng}{$\mathbb Z^n$-graded $S$-module}
\def\sdepth{\operatorname{sdepth}}
\def\depth{\operatorname{depth}}
\def\reg{\operatorname{reg}}
\def\Ass{\operatorname{Ass}}
\begin{document}
\selectlanguage{english}
\frenchspacing

\large
\begin{center}
\textbf{On the Stanley depth of a special class of Borel type ideals}

Mircea Cimpoea\c s
\end{center}
\normalsize

\begin{abstract}
We give sharp bounds for the Stanley depth of a special class of monomial ideals of Borel type.

\noindent \textbf{Keywords:} monomial ideals, ideals of Borel type, Stanley depth.

\noindent \textbf{MSC 2010:}Primary: 13C15, Secondary: 13P10, 13F20, 05C07 .
\end{abstract}

\section*{Introduction}

Let $K$ be a field and $S=K[x_1,\ldots,x_n]$ the polynomial ring over $K$.
Let $M$ be a \Zng. A \emph{Stanley decomposition} of $M$ is a direct sum $\mathcal D: M = \bigoplus_{i=1}^rm_i K[Z_i]$ of $\mathbb Z^n$-graded $K$-vector spaces, where $m_i\in M$ is homogeneous with respect to $\mathbb Z^n$-grading, $Z_i\subset\{x_1,\ldots,x_n\}$ such that $m_i K[Z_i] = \{um_i:\; u\in K[Z_i] \}\subset M$ is a free $K[Z_i]$-submodule of $M$. We define $\sdepth(\mathcal D)=\min_{i=1,\ldots,r} |Z_i|$ and $\sdepth(M)=\max\{\sdepth(\mathcal D)|\;\mathcal D$ is a Stanley decomposition of $M \}$. The number $\sdepth(M)$ is called the \emph{Stanley depth} of $M$. 

Stanley conjectured in \cite{stan} that $\sdepth(M)\geq \depth(M)$ for any $M$. The conjecture was disproved in \cite{duval} for $M=S/I$, where $I\subset S$ is a monomial ideal, but remains open in the case $M=I$. Herzog, Vladoiu and Zheng showed in \cite{hvz} that $\sdepth(M)$ can be computed in a finite number of steps if $M=I/J$, where $J\subset I\subset S$ are monomial ideals. In \cite{rin}, Rinaldo gave a computer implementation for this algorithm, in the computer algebra system $\mathtt{CoCoA}$ \cite{cocoa}. For an introduction in the thematic of Stanley depth, we refer the reader to \cite{herz}.

We say that a monomial ideal $I\subset S$ is of \emph{Borel type}, see \cite{her}, if it satisfies the following condition:
$(I:x_j^{\infty}) = (I:(x_1,\ldots,x_j)^{\infty}),\;(\forall)1\leq j\leq n$. The \emph{Mumford-Castelnuovo regularity} of $I$ is the number $\reg(I)=\max\{j-i\;:\; \beta_{ij}(I)\neq 0\}$, where $\beta_{ij}$'s are the graded Betti numbers. The regularity of the ideals of Borel type was extensively studied, see for instance \cite{her}, \cite{ber} and \cite{mir}. In the first section, we study the invariant $\sdepth(I)$, for an ideal of Borel type. In the general case, we note some bounds for $\sdepth(I)$, see Proposition $1.2$ and we give some tighter ones, when $I$ has a special form, see Theorem $1.6$.

\footnotetext[1]{The support from grant ID-PCE-2011-1023 of Romanian Ministry of Education, Research and Innovation is gratefully acknowledged.}

\section{Main results}

First, we recall the construction of the sequential chain associated to a Borel type ideal $I\subset S$, see \cite{her} for more details.
Assume that $\Ass(S/I)=\{P_0,\ldots,P_m\}$ with $P_i=(x_1,\ldots,x_{n_i})$, where $n\geq n_0>n_1>\cdots>n_m\geq 1$. Also, assume that
 $I=\bigcap_{i=0}^m Q_i$ is the reduced primary decomposition of $I$, with $P_i=\sqrt{Q_i}$, for all $0\leq i\leq m$.

We define $I_k:=\bigcap_{j=k}^m Q_j$, for all $0\leq k\leq m$. One can easily check that $I_i=(I_{i-1}:x_{n_{i-1}}^{\infty})$, for all $1\leq i\leq m$. The sequence of ideals $I=I_0\subset I_1\subset \cdots \subset I_m\subset I_{m+1}:=S$ is called the \emph{sequential sequence} of $I$.
Let $J_i$ be the monomial ideal generated by $G(I_i)$ in $S_i:=K[x_1,\ldots,x_{n_i}]$, for all 
$0\leq i \leq m$. Then, the saturation $J_i^{sat}=(J_i:(x_1,\ldots,x_{n_i})^{\infty}) = J_{i+1}S_i$, for all $0\leq i\leq m$, where $J_{m+1}:=S_{m+1}$. One has $I_{i+1}/I_i \cong (J_i^{sat}/J_i)[x_{n_i+1},\ldots,x_n]$. If $M=\bigoplus_{t\geq 0} M_t$ is an Artinian graded
$S$-module, we denote $s(M)=\max\{t\;:\;M_t\neq 0\}$. We recall the following result.

\begin{prop}(\cite[Corollary 2.7]{her})
$\reg(I)=\max\{s(J_0^{sat}/J_0),\ldots,s(J_m^{sat}/J_m) \}+1$.
\end{prop}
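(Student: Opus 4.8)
The plan is to read off $\reg(I)$ from the sequential sequence by computing the regularity of each successive quotient $I_{i+1}/I_i$ and then showing that, because these quotients have \emph{pairwise distinct dimensions}, nothing cancels when one reassembles $S/I$ from them. First I would record the standard reduction $\reg(I)=\reg(S/I)+1$, coming from $0\to I\to S\to S/I\to 0$ together with $\reg(S)=0$; it then suffices to prove $\reg(S/I)=\max\{s(J_0^{sat}/J_0),\ldots,s(J_m^{sat}/J_m)\}$. The filtration $0=I_0/I_0\subset I_1/I_0\subset\cdots\subset I_{m+1}/I_0=S/I$ has successive quotients $I_{i+1}/I_i$, so the whole computation is governed by the numbers $r_i:=\reg(I_{i+1}/I_i)$.

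The first technical step is the identification $r_i=s(J_i^{sat}/J_i)$. Write $N_i:=J_i^{sat}/J_i$. Since $J_i^{sat}=(J_i:\me_i^{\infty})$ with $\me_i=(x_1,\ldots,x_{n_i})$, the module $N_i$ is the $\me_i$-torsion of $S_i/J_i$, hence of finite length over $S_i$, so $H^0_{\me_i}(N_i)=N_i$ and $\reg(N_i)=s(N_i)$. Using $I_{i+1}/I_i\cong N_i[x_{n_i+1},\ldots,x_n]$, adjoining the polynomial variables is a flat extension and preserves regularity; concretely $I_{i+1}/I_i$ is a free $K[x_{n_i+1},\ldots,x_n]$-module, Cohen--Macaulay of dimension $d_i:=n-n_i$, whose only nonvanishing local cohomology is $H^{d_i}_{\me}(I_{i+1}/I_i)$, with top internal degree $s(N_i)-d_i$. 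Thus $r_i=s(J_i^{sat}/J_i)$, and each quotient feeds its regularity into cohomological degree exactly $d_i$.

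The upper bound $\reg(S/I)\le\max_i r_i$ is then immediate by descending induction on $0\to I_{i+1}/I_i\to S/I_i\to S/I_{i+1}\to 0$, using $\reg(B)\le\max\{\reg(A),\reg(C)\}$ for the middle term of $0\to A\to B\to C\to 0$, starting from $\reg(S/I_{m+1})=\reg(0)$. The heart of the matter, and the step I expect to be the main obstacle, is the matching lower bound $\reg(S/I)\ge r_k$ for every $k$, i.e. showing that the extremal class in $H^{d_k}_{\me}(I_{k+1}/I_k)$ survives into $S/I$. Here the decisive structural fact is that $n\ge n_0>n_1>\cdots>n_m\ge 1$ forces the dimensions $d_0<d_1<\cdots<d_m$ to be pairwise distinct, so distinct quotients contribute to \emph{distinct} cohomological degrees.

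I would exploit this through the long exact local cohomology sequences of the filtration. First, an induction on the sequences $0\to I_{l+1}/I_l\to S/I_l\to S/I_{l+1}\to 0$ shows $\depth(S/I_{k+1})\ge d_{k+1}$, hence $H^j_{\me}(S/I_{k+1})=0$ for $j<d_{k+1}$. Consequently, in the sequence for $l=k$, the connecting maps touching $H^{d_k}_{\me}(I_{k+1}/I_k)$ vanish for degree reasons (their neighbours $H^{d_k-1}_{\me}(S/I_{k+1})$ and $H^{d_k}_{\me}(S/I_{k+1})$ are zero since $d_k<d_{k+1}$), giving $H^{d_k}_{\me}(S/I_k)\cong H^{d_k}_{\me}(I_{k+1}/I_k)$. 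I would then propagate this isomorphism down to $S/I_0=S/I$ along $0\to I_l/I_{l-1}\to S/I_{l-1}\to S/I_l\to 0$ for $l\le k$: since $I_l/I_{l-1}$ is concentrated in cohomological degree $d_{l-1}<d_l\le d_k$, both terms $H^{d_k}_{\me}(I_l/I_{l-1})$ and $H^{d_k+1}_{\me}(I_l/I_{l-1})$ vanish, so $H^{d_k}_{\me}(S/I_{l-1})\cong H^{d_k}_{\me}(S/I_l)$. This yields $H^{d_k}_{\me}(S/I)\cong H^{d_k}_{\me}(I_{k+1}/I_k)$, whence $\reg(S/I)\ge s(J_k^{sat}/J_k)$ for each $k$. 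Combining the two bounds gives $\reg(S/I)=\max_k s(J_k^{sat}/J_k)$, and therefore $\reg(I)=\max_k s(J_k^{sat}/J_k)+1$.
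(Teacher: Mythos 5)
Your target statement is Proposition 1.1 of the paper, for which the paper gives no proof at all: it is quoted directly from \cite[Corollary 2.7]{her}, so there is no internal argument to compare yours against. That said, your reconstruction is correct and complete. The identification $\reg(I_{i+1}/I_i)=s(J_i^{sat}/J_i)$ is right: $N_i=J_i^{sat}/J_i=H^0_{\me_i}(S_i/J_i)$ has finite length, and tensoring over $K$ with $K[x_{n_i+1},\ldots,x_n]$ leaves the graded Betti numbers (hence the regularity) unchanged, while making $I_{i+1}/I_i$ Cohen--Macaulay of dimension $d_i=n-n_i$ with local cohomology concentrated in degree $d_i$ and top internal degree $s(N_i)-d_i$. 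The upper bound via $\reg(B)\le\max\{\reg(A),\reg(C)\}$ along the filtration is standard, and your lower bound correctly isolates $H^{d_k}_{\me}(S/I)\cong H^{d_k}_{\me}(I_{k+1}/I_k)$ from the strict inequalities $d_0<d_1<\cdots<d_m$; this non-cancellation phenomenon is exactly the sequentially Cohen--Macaulay structure of $S/I$ that makes Borel type ideals tractable. Your route is, up to graded local duality, the same as that of the cited source: \cite{her} computes the modules $\operatorname{Ext}^{n-j}_S(S/I,S)$ along the same sequential chain and reads the regularity off from them, which is the dual formulation of your local cohomology computation, so the mathematical content coincides even though the paper itself only cites the result. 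Two small points you should make explicit to close the argument: each $N_k$ is nonzero (the decomposition $I=\bigcap_{i=0}^m Q_i$ is irredundant, so $P_k\in\Ass(S/I_k)$, hence $\me_k\in\Ass(S_k/J_k)$ and $H^0_{\me_k}(S_k/J_k)\neq 0$), since otherwise $s(N_k)$ is undefined and index $k$ contributes nothing to the maximum; and the descending induction for the upper bound should state the convention $\reg(0)=-\infty$ at the starting term $S/I_{m+1}=0$.
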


\begin{prop}
With the above notations, the following assertions hold:

$(1)$ $\sdepth(S/I_i)=\depth(S/I_i)=n-n_i$, for all $0\leq i\leq n$.

$(2)$ $\sdepth(I_0)\leq \sdepth(I_1) \leq \cdots \leq \sdepth(I_m)$.

$(3)$ $\depth(I_i)=n-n_i+1 \leq \sdepth(I_i)\leq \sdepth(P_i) =  n - \left\lfloor  \frac{n_i}{2} \right\rfloor,\;(\forall)0\leq i\leq m$.
\end{prop}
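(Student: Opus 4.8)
The backbone of the argument is the structural observation that each $I_i$ is extended from a smaller polynomial ring. Since every $Q_j$ with $j\ge i$ is $P_j$-primary with $P_j\subseteq P_i=(x_1,\ldots,x_{n_i})$, its monomial generators involve only $x_1,\ldots,x_{n_i}$; hence $G(I_i)\subset S_i$ and $I_i=J_iS$, so that $S/I_i\cong (S_i/J_i)[x_{n_i+1},\ldots,x_n]$ and $P_i$ comes from $(x_1,\ldots,x_{n_i})\in\Ass(S_i/J_i)$. I plan to isolate two general lemmas as the engines. \emph{Lemma A}: for a monomial ideal $I$ and a monomial $u$, $\sdepth((I:u))\ge\sdepth(I)$. \emph{Lemma B}: if $N$ is a $\mathbb Z^n$-graded subquotient of a free $S$-module and $P=(x_j:j\in\sigma)\in\Ass(N)$, then $\sdepth(N)\le n-|\sigma|=\dim(S/P)$.

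For Lemma A it suffices, iterating one variable at a time, to treat $u=x_n$; multiplication by $x_n$ identifies $(I:x_n)$ with $I\cap(x_n)$ up to a degree shift, so $\sdepth((I:x_n))=\sdepth(I\cap(x_n))$. Given a Stanley decomposition $I=\bigoplus_k m_kK[Z_k]$ with $\sdepth=d$, I restrict each piece to its intersection with $(x_n)$: keep $m_kK[Z_k]$ if $x_n\mid m_k$, keep $x_nm_kK[Z_k]$ if $x_n\nmid m_k$ but $x_n\in Z_k$, and discard it otherwise. Because the decomposition is multigraded and each monomial of $I\cap(x_n)$ lies in a unique piece, this produces a Stanley decomposition of $I\cap(x_n)$ with all $|Z|\ge d$. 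For Lemma B, choose a monomial $v$ with $\operatorname{Ann}(v)=P$; since each multigraded component of $N$ is at most one-dimensional, $v$ lies in a single Stanley space $m_{k_0}K[Z_{k_0}]$, and $x_jv=0$ for $j\in\sigma$ forces $x_j\notin Z_{k_0}$, whence $|Z_{k_0}|\le n-|\sigma|$.

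Granting these, $(1)$ follows from $S/I_i\cong(S_i/J_i)[x_{n_i+1},\ldots,x_n]$: as $(x_1,\ldots,x_{n_i})\in\Ass(S_i/J_i)$ we get $\depth_{S_i}(S_i/J_i)=0$, so additivity of depth under polynomial extension gives $\depth(S/I_i)=n-n_i$; extending any Stanley decomposition of $S_i/J_i$ by the $n-n_i$ new variables gives $\sdepth(S/I_i)\ge n-n_i$, while Lemma B applied to $P_i$ gives the reverse inequality, hence equality. For $(2)$, since $I_{i+1}=(I_i:x_{n_i}^{\infty})=(I_i:x_{n_i}^{k})$ for $k\gg0$, Lemma A yields $\sdepth(I_i)\le\sdepth(I_{i+1})$. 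For $(3)$, the exact sequence $0\to I_i\to S\to S/I_i\to0$ together with $(1)$ and the depth lemma give $\depth(I_i)=n-n_i+1$; the identity $I_i=J_iS$ lets me extend a Stanley decomposition of the nonzero ideal $J_i\subset S_i$ (which has $\sdepth\ge1$) by $n-n_i$ variables, so $\sdepth(I_i)\ge n-n_i+1$; and picking a monomial $v$ with $(I_i:v)=P_i$, Lemma A gives $\sdepth(I_i)\le\sdepth(P_i)$, where $\sdepth(P_i)=n-\lfloor n_i/2\rfloor$ follows from the known value $\sdepth((x_1,\ldots,x_r))=\lceil r/2\rceil$ in $r$ variables and additivity under adjoining the remaining $n-n_i$ variables.

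The crux is Lemma A: the content is that restricting a Stanley decomposition of $I$ to $I\cap(x_n)$ never shrinks the sets $Z_k$, which works precisely because we deal with monomial (multigraded) decompositions whose graded components are at most one-dimensional; the same one-dimensionality is what makes Lemma B clean. The auxiliary facts I will simply quote—that a nonzero monomial ideal has $\sdepth\ge1$, that depth and Stanley depth add under adjoining variables, and the value $\sdepth((x_1,\ldots,x_r))=\lceil r/2\rceil$—are all available in the literature, so the monomial bookkeeping in Lemma A is the only genuinely delicate step.
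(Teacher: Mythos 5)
Your argument is correct and follows essentially the same route as the paper: reduction to the smaller ring $S_i$ via additivity of depth and Stanley depth under adjoining variables, colon-ideal monotonicity for part $(2)$ and for the bound $\sdepth(I_i)\leq\sdepth(P_i)$, and the Biro--Howard--Keller--Trotter--Young value to evaluate $\sdepth(P_i)=n-\left\lfloor n_i/2\right\rfloor$. The only real difference is that you prove inline the two external ingredients the paper merely cites: your Lemma A is precisely Popescu's result (cited in the paper as \cite[Proposition 1.3]{pop}), and your Lemma B is a generalization of the depth-zero criterion $\depth(S/I)=0\Rightarrow\sdepth(S/I)=0$ (cited as \cite[Theorem 1.4]{cim} or \cite[Proposition 18]{herz}), which the paper uses for the upper bound in $(1)$ after transferring to $S_i/J_i$, whereas you apply Lemma B directly to $S/I_i$; both of your inline proofs are sound. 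One small caveat: Lemma B as stated, for a subquotient of an arbitrary free $S$-module, is more general than your proof supports, since the argument needs every multidegree component of $N$ to be at most one-dimensional, i.e. $N=I/J$ with $J\subset I\subset S$ monomial; this suffices here, as you only apply it to $N=S/I_i$.
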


\begin{proof}
$(1)$ From \cite[Lemma 3.6]{hvz} it follows that $\sdepth(S/I_i)=\sdepth(S_{i}/J_i)+n-n_i$. Also, we have $\depth(S/I_i)=\depth(S_{i}/J_i)+n-n_i$. Since $P_iS_i = (x_1,\ldots,x_{n_i})S_i \in Ass(S_i/J_i)$, it follows that $\depth(S_i/J_i)=0$ and thus, by \cite[Theorem 1.4]{cim} or 
\cite[Proposition 18]{herz}, we get $\sdepth(S_i/J_i)=\depth(S_i/J_i)=0$.

$(2)$ Since $I_i=(I_{i-1}:x_{n_{i-1}}^{\infty})$, by \cite[Proposition 1.3]{pop} (see arXiv version), we get $\sdepth(I_{i-1}) \leq \sdepth(I_i)$, for all $1\leq i\leq m$.

$(3)$ Since $I_i=J_iS$, by \cite[Lemma 3.6]{hvz}, it follows that $\sdepth(I_i)=n-n_i+\sdepth_{S_i}(J_i)\geq n-n_i+1$. Since $P_i\in \Ass(I_i)$, it follows that there exists a monomial $v\in S$, such that $P_i=(I_i:v)$. Therefore, by \cite[Proposition 1.3]{pop} (see arXiv version), it follows that $\sdepth(P_i)\geq \sdepth(I_i)$. On the other hand, $P_i$ is generated by variables. Thus, by \cite[Lemma 3.6]{hvz} and 
\cite[Theorem 1.1]{biro}, it follows that $\sdepth(P_i)=n - \left\lfloor  \frac{ht(P_i)}{2} \right\rfloor =  n - \left\lfloor  \frac{n_i}{2} \right\rfloor$.
\end{proof}

\begin{lema}
Let $r\leq n$ and $a_1,\ldots,a_r$ be some positive integers. If $Q=(x_1^{a_1},\ldots,x_r^{a_r})\subset S$, then $reg(Q)=a_1+\cdots+a_r-r+1$.
\end{lema}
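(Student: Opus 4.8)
The plan is to recognize $Q$ as a complete intersection and read off its regularity from the Koszul complex, then convert from $\reg(S/Q)$ to $\reg(Q)$. First I would observe that $x_1^{a_1},\ldots,x_r^{a_r}$ is a regular sequence in $S$ (distinct variables raised to positive powers), so the Koszul complex $K_\bullet$ on these elements is a minimal graded free resolution of $S/Q$. Minimality holds because each generator $x_i^{a_i}$ lies in the maximal ideal $\me=(x_1,\ldots,x_n)$, hence every entry of every differential does too. The $i$-th term of this resolution is
$$ K_i = \bigoplus_{1\leq j_1<\cdots<j_i\leq r} S\bigl(-(a_{j_1}+\cdots+a_{j_i})\bigr), $$
so that $\beta_{i,d}(S/Q)\neq 0$ precisely when $d=a_{j_1}+\cdots+a_{j_i}$ for some $i$-subset $\{j_1,\ldots,j_i\}$ of $\{1,\ldots,r\}$.

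Next I would compute $\reg(S/Q)=\max\{d-i\;:\;\beta_{i,d}(S/Q)\neq 0\}$. For a subset $T\subseteq\{1,\ldots,r\}$ of size $i$ the corresponding value is $\sum_{j\in T}a_j - i = \sum_{j\in T}(a_j-1)$. Since each $a_j\geq 1$, every summand is nonnegative, so the maximum is attained by taking $T=\{1,\ldots,r\}$, giving $\reg(S/Q)=\sum_{j=1}^r(a_j-1)=a_1+\cdots+a_r-r$. Finally, from the short exact sequence $0\to Q\to S\to S/Q\to 0$ together with $\reg(S)=0$, the standard comparison of regularity along short exact sequences yields $\reg(Q)=\reg(S/Q)+1=a_1+\cdots+a_r-r+1$, as claimed.

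The only genuine point to verify is the minimality of the Koszul resolution (equivalently, that none of the displayed Betti numbers cancel), which is exactly what guarantees that the extremal value of $d-i$ is actually attained rather than merely bounded; this is immediate here because all differential entries are non-units. As an alternative route that avoids resolutions entirely, one could invoke Proposition $1.1$: the ideal $Q$ is $(x_1,\ldots,x_r)$-primary, so its sequential chain is simply $Q\subset S$, with $J_0=(x_1^{a_1},\ldots,x_r^{a_r})$ in $K[x_1,\ldots,x_r]$ and $J_0^{sat}=S_0$. Then $s(J_0^{sat}/J_0)$ is the top degree of the Artinian algebra $K[x_1,\ldots,x_r]/(x_1^{a_1},\ldots,x_r^{a_r})$, namely the degree $a_1+\cdots+a_r-r$ of the socle generator $x_1^{a_1-1}\cdots x_r^{a_r-1}$, and Proposition $1.1$ gives $\reg(Q)=(a_1+\cdots+a_r-r)+1$ directly.
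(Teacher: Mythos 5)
Your main argument is correct but genuinely different from the paper's. The paper proves the lemma in one line: since $Q$ is $(x_1,\ldots,x_r)$-primary, Proposition $1.1$ applied to the restriction $\bar Q = Q\cap K[x_1,\ldots,x_r]$ gives $\reg(Q)=s(S'/\bar Q)+1$, and the top nonzero degree of the Artinian quotient $K[x_1,\ldots,x_r]/(x_1^{a_1},\ldots,x_r^{a_r})$ is the socle degree $a_1+\cdots+a_r-r$; this is exactly the ``alternative route'' you sketch in your final paragraph, so that part of your writeup coincides with the paper's proof. Your primary route instead resolves $S/Q$ by the Koszul complex on the regular sequence $x_1^{a_1},\ldots,x_r^{a_r}$, reads off all graded Betti numbers from minimality, and maximizes $d-i=\sum_{j\in T}(a_j-1)$ over subsets $T$; this is self-contained and more elementary in the sense that it needs none of the Borel-type machinery, only the standard fact that the Koszul complex on a homogeneous regular sequence inside the maximal ideal is a minimal free resolution, whereas the paper's proof is shorter because Proposition $1.1$ is already on hand. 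One small refinement: the passage from $\reg(S/Q)$ to $\reg(Q)$ is cleaner if you avoid the short-exact-sequence inequalities (which by themselves give $\reg(Q)\leq\reg(S/Q)+1$ and need an extra word for the reverse direction) and instead observe that truncating your minimal resolution of $S/Q$ yields the minimal resolution of $Q$, so $\beta_{i,d}(Q)=\beta_{i+1,d}(S/Q)$; then $\reg(Q)=\reg(S/Q)+1$ is immediate, and indeed you can read $\reg(Q)=\max\{d-i:\beta_{i,d}(Q)\neq 0\}=a_1+\cdots+a_r-r+1$ directly in the form in which the paper defines regularity.
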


\begin{proof}
Let $\bar Q = Q\cap S'\subset S'$, where $S'=K[x_1,\ldots,x_r]$. As a particular case of Proposition $1.1$, we get $\reg(Q)=\reg(\bar Q)=s(S'/\bar Q)+1 = a_1+\cdots+a_r-r+1$.
\end{proof}

We recall the following result from \cite{ber}.

\begin{prop}(\cite[Corollary 3.17]{ber})
If $I\subset S$ is an ideal of Borel type with the irredundant irreducible decomposition $I=\bigcap_{i=1}^r C_i$, then 
$\reg(I)=\max\{ \reg(C_i):\; 1\leq i\leq r\}$.
\end{prop}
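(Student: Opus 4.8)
The plan is to prove the equality by induction on $r$, after first exploiting the special shape of the components. Since every associated prime of a Borel type ideal is an initial segment $(x_1,\ldots,x_t)$, each irreducible component is a complete intersection of pure powers $C_i=(x_1^{a_{i1}},\ldots,x_{t_i}^{a_{it_i}})$ supported on an initial set of variables, and the supports of the $C_i$ form a chain under inclusion. Hence Lemma $1.3$ computes each $\reg(C_i)$ explicitly, and for any two components $C,D$ with $\supp(C)\subseteq\supp(D)$ the sum $C+D$ is again such a complete intersection on $\supp(D)$, with exponents the pointwise minima on $\supp(C)$ and the exponents of $D$ elsewhere. Writing $I=C_1\cap I'$ with $I'=\bigcap_{i\geq 2}C_i$, the distributivity of $+$ over $\cap$ for monomial ideals (which holds because monomial sets turn $+$ into union and $\cap$ into intersection) gives $C_1+I'=\bigcap_{i\geq 2}(C_1+C_i)$, again a Borel type ideal with fewer components, so the induction hypothesis applies both to $I'$ and to $C_1+I'$.

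For the upper bound I would feed the Mayer--Vietoris sequence $0\to S/I\to S/C_1\oplus S/I'\to S/(C_1+I')\to 0$ into the standard estimate $\reg(S/I)\le\max\{\reg(S/C_1),\,\reg(S/I'),\,\reg(S/(C_1+I'))+1\}$. By the induction hypothesis the first two terms are at most $\max_i\reg(C_i)-1$, so the third term is the crux. Using the explicit description of $C_1+C_i$ and Lemma $1.3$ one computes, when $\supp(C_1)\subseteq\supp(C_i)$, that $\reg(C_i)-\reg(C_1+C_i)=\sum_{j\in\supp(C_1)}\max(a_{ij}-a_{1j},0)$, and symmetrically in the other case. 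Irredundancy forces $C_1$ and $C_i$ to be incomparable, so on the smaller support some exponent strictly increases; this makes the displayed difference at least $1$, whence $\reg(C_1+C_i)\le\max\{\reg(C_1),\reg(C_i)\}-1\le\max_i\reg(C_i)-1$. By the induction hypothesis $\reg(C_1+I')=\max_{i\geq 2}\reg(C_1+C_i)\le\max_i\reg(C_i)-1$, so the third term is $\le\max_i\reg(C_i)-1$ as well, giving $\reg(I)\le\max_i\reg(C_i)$.

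For the lower bound, which I expect to be the main obstacle, one direction is free: whenever the middle term strictly dominates the right one, i.e. $\reg(S/(C_1+I'))+1<\reg(S/C_1\oplus S/I')=\max_i\reg(C_i)-1$, the long exact sequence of the $H^i_{\me}(-)$ shows that the extremal graded piece of $H^i_{\me}(S/C_1\oplus S/I')$ maps to zero in $H^i_{\me}(S/(C_1+I'))$, hence is the image of $H^i_{\me}(S/I)$, forcing $\reg(S/I)\ge\max_i\reg(C_i)-1$ and closing the induction. The delicate point is the boundary case where $\reg(C_1+I')=\max_i\reg(C_i)-1$ is actually attained and the domination is not strict. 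There I would abandon the induction and argue directly through Proposition $1.1$: if $C_{i_0}$ realizes $\max_i\reg(C_i)$, its radical $P_{i_0}=(x_1,\ldots,x_{t_{i_0}})$ is an associated prime, say $P_{i_0}=(x_1,\ldots,x_{n_k})$, and as a complete intersection $C_{i_0}$ contributes a top Koszul socle in degree $a_{i_01}+\cdots+a_{i_0t_{i_0}}$. I would show that this socle persists in the Artinian module $J_k^{sat}/J_k$, so that $s(J_k^{sat}/J_k)\ge\reg(C_{i_0})-1$, and then Proposition $1.1$ yields $\reg(I)\ge s(J_k^{sat}/J_k)+1\ge\reg(C_{i_0})$. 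Establishing that this extremal socle class is not cancelled by the other components of the $P_{i_0}$-primary part is the technical heart of the argument.
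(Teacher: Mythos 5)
The paper never proves this proposition: it is recalled verbatim from Bermejo--Gimenez \cite[Corollary 3.17]{ber}, so any blind proof is automatically a different route, and a self-contained argument using only the tools already in the paper has real value here. Most of yours is correct. The structural facts you invoke all hold: the irreducible components of a Borel type ideal are generated by pure powers of variables on initial segments (their radicals are the associated primes, which for Borel type ideals are of the form $(x_1,\ldots,x_t)$), so the supports are totally ordered; $C_1+C_i$ is again irreducible with exponents the componentwise minima on the common support; sums of monomial ideals distribute over intersections; and irredundancy forces pairwise incomparability, so your formula $\reg(C_i)-\reg(C_1+C_i)=\sum_{j\in\supp(C_1)}\max(a_{ij}-a_{1j},0)$ (for $\supp(C_1)\subseteq\supp(C_i)$) is at least $1$. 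Together with Lemma 1.3, the Mayer--Vietoris sequence and induction this gives $\reg(I)\le\max_i\reg(C_i)$ cleanly. (One small imprecision: $\bigcap_{i\ge 2}(C_1+C_i)$ need not be irredundant, so the induction hypothesis yields only $\reg(C_1+I')\le\max_{i\ge 2}\reg(C_1+C_i)$ rather than equality; the inequality is all you use.)

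The one genuine defect is in the lower bound, and it is a miscalibration rather than a missing idea. The long exact sequence argument needs only that the extremal piece of $H^j_{\me}(S/C_1\oplus S/I')$, in bidegree $(j,d)$ with $j+d=\reg(S/C_1\oplus S/I')$, maps into a vanishing graded piece of $H^j_{\me}(S/(C_1+I'))$; this is guaranteed as soon as $\reg(S/(C_1+I'))<\reg(S/C_1\oplus S/I')$, not under the stronger hypothesis $\reg(S/(C_1+I'))+1<\reg(S/C_1\oplus S/I')$ that you impose. Your own key inequality $\reg(C_1+I')\le\max_i\reg(C_i)-1$ shows the weaker condition always holds, since it gives $\reg(S/(C_1+I'))\le\max_i\reg(C_i)-2<\max_i\reg(C_i)-1=\reg(S/C_1\oplus S/I')$, the last equality coming from the induction hypothesis applied to $I'$. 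Hence the ``boundary case'' you single out never needs separate treatment, and the final paragraph of your proposal --- the socle-persistence argument through Proposition 1.1, which you yourself flag as unestablished and which, as written, is the only real gap --- should simply be deleted. With the threshold corrected, your induction closes and the proof is complete.
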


Let $n\geq n_0 > n_1 > \cdots > n_m \geq 1$ be some integers. 
Let $a_{ij}$ be some positive integers, where $0\leq i\leq m$ and $1\leq j\leq n_i$.
We consider the monomial irreducible ideals $Q_i=(x_1^{a_{i1}},\ldots,x_{n_i}^{a_{in_i}})$, for $0\leq i\leq m$. Let $I_i:=\bigcap_{j=i}^m Q_j$ and denote $I=I_0$. Since $P_i=(x_1,\ldots,x_{n_i}) = \sqrt{Q_i}$ for all $0\leq i\leq m$, by \cite[Proposition 5.2]{hp} or \cite[Corollary 1.2]{mir}, it follows that $I$ is an ideal of Borel type. As a direct consequence of Lemma $1.3$ and Proposition $1.4$, we get the following corollary.

\begin{cor}
If $a_{ij}\geq a_{i+1,j}$ for all $j\leq n_{i+1}$ and $i<m$, then
$\reg(I_i) = \reg(Q_i) = a_{i1}+a_{i2}+\cdots+a_{in_i}-n_i+1$, for all $0\leq i\leq m$.
\end{cor}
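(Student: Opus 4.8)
The plan is to reduce the general statement to a single application of Proposition~1.4 together with Lemma~1.3. First I would recall that $I_i = \bigcap_{j=i}^m Q_j$ is, by construction, an ideal of Borel type, so Proposition~1.4 applies and gives
\[
\reg(I_i) = \max\{\reg(Q_i),\reg(Q_{i+1}),\ldots,\reg(Q_m)\}.
\]
Each $Q_j = (x_1^{a_{j1}},\ldots,x_{n_j}^{a_{jn_j}})$ is an irreducible monomial ideal generated by pure powers, so Lemma~1.3 computes its regularity directly as $\reg(Q_j) = \sum_{k=1}^{n_j} a_{jk} - n_j + 1$. Thus the whole problem collapses to showing that among the numbers $\reg(Q_i),\ldots,\reg(Q_m)$, the largest is $\reg(Q_i)$ itself; this is exactly where the monotonicity hypothesis $a_{ij}\geq a_{i+1,j}$ for $j\leq n_{i+1}$ must be used.

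The key comparison step is to show $\reg(Q_i)\geq \reg(Q_{i+1})$, after which an easy induction (or transitivity, since the indices descend from $i$ to $m$) yields $\reg(Q_i)\geq \reg(Q_j)$ for all $j\geq i$. To prove the single-step inequality I would write out
\[
\reg(Q_i)-\reg(Q_{i+1}) = \Big(\sum_{k=1}^{n_i} a_{ik}-n_i\Big) - \Big(\sum_{k=1}^{n_{i+1}} a_{i+1,k}-n_{i+1}\Big).
\]
Since $n_i > n_{i+1}$, I would split the first sum as $\sum_{k=1}^{n_{i+1}} a_{ik} + \sum_{k=n_{i+1}+1}^{n_i} a_{ik}$. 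The hypothesis gives $a_{ik}\geq a_{i+1,k}$ termwise for $k\leq n_{i+1}$, so $\sum_{k=1}^{n_{i+1}} a_{ik} \geq \sum_{k=1}^{n_{i+1}} a_{i+1,k}$; the leftover tail $\sum_{k=n_{i+1}+1}^{n_i} a_{ik}$ consists of $n_i - n_{i+1}$ positive integers, each at least $1$, so it is at least $n_i - n_{i+1}$. Combining these bounds the difference of regularities is seen to be nonnegative, and in fact the tail contribution exactly compensates the $-n_i+n_{i+1}$ coming from the counting terms, which makes the inequality clean.

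I do not expect a genuine obstacle here: the result is essentially a bookkeeping argument once Proposition~1.4 and Lemma~1.3 are invoked, and the corollary is explicitly flagged in the excerpt as a ``direct consequence.'' The only point requiring mild care is the index alignment in the termwise comparison, making sure the hypothesis $a_{ij}\geq a_{i+1,j}$ is applied only in the valid range $j\leq n_{i+1}$ and that the surplus variables $x_{n_{i+1}+1},\ldots,x_{n_i}$ appearing in $Q_i$ but not $Q_{i+1}$ are correctly accounted for by their positive exponents. Once the single-step inequality $\reg(Q_i)\geq\reg(Q_{i+1})$ is established, the conclusion $\reg(I_i)=\reg(Q_i)=a_{i1}+\cdots+a_{in_i}-n_i+1$ follows immediately by taking the maximum.
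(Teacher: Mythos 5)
Your proposal is correct and is exactly the argument the paper intends: the paper states the corollary as a ``direct consequence'' of Lemma~1.3 and Proposition~1.4 with no further detail, and your write-up simply supplies the missing bookkeeping, namely applying Proposition~1.4 to $I_i=\bigcap_{j=i}^m Q_j$, computing each $\reg(Q_j)$ via Lemma~1.3, and verifying $\reg(Q_i)\geq\reg(Q_{i+1})$ by the termwise comparison plus the observation that the tail exponents $a_{ik}\geq 1$ for $k>n_{i+1}$ absorb the difference $n_i-n_{i+1}$. Your single-step inequality chains correctly down to $m$, so the maximum is attained at $j=i$ as required.
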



\begin{teor}
If $a_{ij}\geq a_{i+1,j}$ for all $j\leq n_{i+1}$ and $i<m$, then for all $0\leq i\leq m$, it holds that
$$n - \left\lfloor  \frac{n_i}{2} \right\rfloor \geq \sdepth(I_i) \geq n + \left\lceil \frac{n_m}{2} \right\rceil - n_i.$$ 
\end{teor}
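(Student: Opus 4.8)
The upper bound requires no new work: since $P_i=(x_1,\ldots,x_{n_i})\in\Ass(I_i)$, the inequality $\sdepth(I_i)\leq\sdepth(P_i)=n-\lfloor n_i/2\rfloor$ is exactly the content of Proposition $1.2(3)$. So the whole task is the lower bound $\sdepth(I_i)\geq n+\lceil n_m/2\rceil-n_i$. The plan is first to descend into the ring $S_i=K[x_1,\ldots,x_{n_i}]$: since $I_i=J_iS$, Lemma $3.6$ of \cite{hvz} gives $\sdepth(I_i)=(n-n_i)+\sdepth_{S_i}(J_i)$, so the desired bound is equivalent to $\sdepth_{S_i}(J_i)\geq\lceil n_m/2\rceil$. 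I would prove this by descending induction on $i$, from $i=m$ down to $i=0$.

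For the base case $i=m$, the ideal $J_m=Q_m\cap S_m=(x_1^{a_{m1}},\ldots,x_{n_m}^{a_{mn_m}})$ is a complete intersection filling all $n_m$ variables of $S_m$, and I claim $\sdepth_{S_m}(J_m)=\lceil n_m/2\rceil$. The upper bound is the $i=m$ instance of Proposition $1.2(3)$, and the matching lower bound is the complete-intersection analogue of the result of \cite{biro} used there for the maximal ideal. This is the point at which the value $\lceil n_m/2\rceil$ enters, and it is exactly $n_m$ minus the floor appearing in the upper bound.

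For the inductive step, assume $\sdepth_{S_{i+1}}(J_{i+1})\geq\lceil n_m/2\rceil$. In $S_i$ one has $J_i=\bar Q_i\cap(J_{i+1}S_i)$, where $\bar Q_i$ is $Q_i$ viewed in $S_i$ and $J_{i+1}S_i=\bigcap_{j>i}(Q_j\cap S_i)$. Split the variables as $y=(x_1,\ldots,x_{n_{i+1}})$ and $z=(x_{n_{i+1}+1},\ldots,x_{n_i})$; among $Q_i,\ldots,Q_m$ the $z$-variables occur only in $\bar Q_i$, and only as the pure powers $x_k^{a_{ik}}$. Since $J_{i+1}S_i$ involves no $z$-variable, a monomial $y^{\alpha}z^{\beta}$ lies in $J_i$ iff $y^{\alpha}\in J_{i+1}$ and $\big(y^{\alpha}\in A$ or $z^{\beta}\in(z_k^{a_{ik}})_k\big)$, where $A=(y_1^{a_{i1}},\ldots,y_{n_{i+1}}^{a_{i,n_{i+1}}})$. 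This gives a $K$-vector space splitting of $J_i$ into the part with $y^{\alpha}\in J_{i+1}\cap A$ (all $z^{\beta}$ allowed) and the part with $y^{\alpha}\in J_{i+1}\setminus A$ (only $z^{\beta}$ in the $z$-power ideal allowed). I would promote this to a Stanley decomposition by tensoring a Stanley decomposition of $J_{i+1}\cap A$ in $K[y]$ with the free polynomial ring $K[z]$, and tensoring a Stanley decomposition of $J_{i+1}/(J_{i+1}\cap A)$ with one of the $z$-power complete intersection in $K[z]$ (whose Stanley depth $\lceil(n_i-n_{i+1})/2\rceil$ by the base case only adds free variables). Here the hypothesis $a_{ij}\geq a_{i+1,j}$ gives $A\subseteq\bar Q_{i+1}$ and thereby controls how $J_{i+1}\cap A$ and $J_{i+1}/(J_{i+1}\cap A)$ sit inside the inductively understood $J_{i+1}$.

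The main obstacle is precisely this last point. A lower bound on Stanley depth demands an explicit decomposition in which \emph{every} piece is large, and in the natural filtration $I_i\subset I_{i+1}\subset\cdots$ the target $I_i$ is always the smallest module, so the inequality $\sdepth(M)\geq\min\{\sdepth(U),\sdepth(M/U)\}$ of Rauf bounds the wrong term and the quotient $I_{i+1}/I_i$, of Stanley depth only $n-n_i$, is far too small to drive an induction. The substantive work therefore lives entirely in the explicit construction: one must check that the decompositions of $J_{i+1}\cap A$ and of $J_{i+1}/(J_{i+1}\cap A)$ inherited from $J_{i+1}$ never produce a piece with fewer than $\lceil n_m/2\rceil$ free variables, and it is here that the monotone-exponent hypothesis and the base case are both indispensable.
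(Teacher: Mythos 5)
Your upper bound, the reduction to $S_i$ via \cite[Lemma 3.6]{hvz}, and the base case $\sdepth_{S_m}(J_m)=\lceil n_m/2\rceil$ (the complete intersection result, cited in the paper as \cite[Theorem 1.3]{mirc}) are all correct and agree with the paper. The fatal problem is the partition you chose in the inductive step. You split the monomials $y^{\alpha}z^{\beta}$ of $J_i$ into piece (1): $y^{\alpha}\in J_{i+1}\cap A$ with $z^{\beta}$ arbitrary, and piece (2): $y^{\alpha}\in J_{i+1}\setminus A$ with $z^{\beta}\in B$, where $B=(x_k^{a_{ik}}:\,n_{i+1}<k\leq n_i)$. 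Piece (2) can never be large: since $A$ contains a power of every $y$-variable, a Stanley space $uK[Z]$ contained in piece (2) cannot have any $y_j\in Z$ (multiplying by $y_j$ eventually pushes the $y$-part into $A$), so the Stanley depth of piece (2) is at most $n_i-n_{i+1}$ in $S_i$. Whenever $n_i-n_{i+1}<\lceil n_m/2\rceil$ (e.g. $n_i=n_{i+1}+1$ with $n_m$ large), the minimum over your two pieces is strictly below the target, so no amount of care in constructing the decompositions can close your induction. Your fallback hope of bounding $\sdepth(J_{i+1}/(J_{i+1}\cap A))$ is also vain: that module is Artinian over $K[y]$, again because $A$ contains a power of every $y_j$, hence its Stanley depth is $0$.

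The paper escapes this trap by assigning the ``corner'' monomials (those with $y^{\alpha}\in J_{i+1}\cap A$ \emph{and} $z^{\beta}\in B$) to the second piece rather than the first, so that both pieces remain full ideals in the $y$-direction. Concretely, with $U_i=AS$ and $V_i=BS$, it writes $I_i=(U_i\cap I_{i+1})+(V_i\cap I_{i+1})$ and splits $I_i=(\bar J\otimes_K(S'/\bar V_i))\oplus(V_i\cap I_{i+1})$, where $J=U_i\cap I_{i+1}$. The second piece is now an honest intersection of two monomial ideals, and \cite[Remark 1.3]{sir} gives $\sdepth(V_i\cap I_{i+1})\geq\sdepth(V_i)+\sdepth(I_{i+1})-n=\sdepth(I_{i+1})-\lfloor(n_i-n_{i+1})/2\rfloor$, which suffices by induction; no $y$-variables are sacrificed. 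The first piece is close to your piece (1): the hypothesis $a_{ij}\geq a_{i+1,j}$ gives $U_i\subseteq Q_{i+1}$, so $J=U_i\cap Q_{i+2}\cap\cdots\cap Q_m$ is again an ideal of the same class with top index $n_{i+1}$, and the induction hypothesis yields $\sdepth(J)\geq n+\lceil n_m/2\rceil-n_{i+1}$; tensoring with the Artinian quotient $S'/\bar V_i$ costs only $n_i-n_{i+1}$ by \cite[Proposition 2.10]{bruns}, and \cite[Lemma 2.2]{asia} combines the two pieces. Note that this forces the induction to be formulated for the whole class of such ideals (effectively induction on the number of irreducible components), since it is applied to $J$, which is not one of the ideals $I_k$ in the original chain; your idea of ``inheriting'' a decomposition from $J_{i+1}$ cannot substitute for this, because Stanley decompositions do not restrict to intersections or submodules.
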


\begin{proof}
The first inequality follows from Proposition $1.2(3)$. In order to prove the second one, we use induction on $i\leq m$. If $i=m$, then $I_m=Q_m$ is an irreducible ideal, and therefore, by \cite[Theorem 1.3]{mirc}, $\sdepth(I_m)=n-\left\lfloor \frac{n_m}{2} \right\rfloor =  n + \left\lceil \frac{n_m}{2} \right\rceil - n_m$.

Assume $i<m$. We can write $Q_i=U_i + V_i$, where $U_i=(x_1^{a_{i1}},\ldots,x_{n_{i+1}}^{a_{in_{i+1}}})$ and $V_i=(x_{n_{i+1}+1}^{a_{i,n_{i+1}+1}},\ldots,x_{n_i}^{a_{in_i}})$. Since $a_{ij}\geq a_{i+1,j}$ for all $j\leq n_{i+1}$, it follows that $U_i\subset Q_{i+1}$. Therefore, $I_i = (U_i+V_i)\cap I_{i+1} = (U_i\cap I_{i+1})+(V_i\cap I_{i+1})$. Note that $J:=U_i\cap I_{i+1} = U_i\cap I_{i+2}$ is a Borel type ideal with the irreducible irredundant decomposition $J=U_i\cap Q_{i+2}\cap \cdots \cap Q_m$, and, therefore, of the same class as $I_{i+1}$. Thus, by induction hypothesis, it follows that $\sdepth(J)\geq n + \left\lceil \frac{n_m}{2} \right\rceil - n_{i+1}$.

On the other hand, by \cite[Remark 1.3]{sir} and the induction hypothesis, 
$\sdepth(V_i\cap I_{i+1}) \geq \sdepth(V_i)+\sdepth(I_{i+1}) - n = \sdepth(I_{i+1}) - \left\lfloor \frac{n_i-n_{i+1}}{2} \right\rfloor$.

Let $\bar V_i\subset S'=K[x_{n_{i+1}+1},\ldots,x_{n_i}]$ be the monomial ideal generated by $G(V_i)$ and let $\bar J \subset S''=K[x_1,\ldots,x_{n_{i+1}},x_{n_i+1},\ldots,x_n]$ be the monomial ideal generated by $G(J)$. Since $J\subset I_{i+1}$, it follows that $I_i = (\bar J \otimes_K (S''/\bar V_i)) \oplus (V_i\cap I_i)$. By \cite[Proposition 2.10]{bruns} and \cite[Lemma 2.2]{asia}, we get:
$$\sdepth(I_i)\geq \min\{ \sdepth(J)-n_i+n_{i+1} , \sdepth(I_{i+1}) - \left\lfloor \frac{n_i-n_{i+1}}{2} \right\rfloor \} \geq n + \left\lceil \frac{n_m}{2} \right\rceil - n_{i},$$ as required.
\end{proof}

\noindent
\textbf{Question}: What can we say about the case when the condition $a_{ij}\geq a_{i+1,j}$ is removed? Of course, the method used in the proof of the Theorem $1.6$ do not work. However, our computer experiments in $\mathtt{Cocoa}$ \cite{cocoa} suggested that the conclusion of the Theorem $1.6$ might be true. Unfortunately, we are not able to give either a proof, or a counterexample.

The next example shows that the bounds given in Theorem $1.6$ are sharp.

\begin{exm}
\emph{Let $I=Q_0\cap Q_1$, where  $Q_0=(x_1^3,x_2^2,x_3^2,x_4,x_5)$ and $Q_1= (x_1,x_2,x_3,x_4)$ are ideals in $S=K[x_1,\ldots,x_5]$. Then $I_1=Q_1$ and $\sdepth(I_1)=5-\left\lfloor \frac{4}{2} \right\rfloor = 3$. Also $n=5$, $n_0=5$ and $n_1=4$. Using $\mathtt{CoCoa}$, we get $\sdepth(I)=2=n-\left\lceil \frac{n_1}{2} \right\rceil - n_0$. Let $Q'_0=(x_1^2,x_2^2,x_3,x_4,x_5)\subset S$ and $I'=Q'_0\cap Q_1$. Using $\mathtt{CoCoa}$ \cite{cocoa}, we get $\sdepth(I')=3 = n - \left\lfloor  \frac{n_0}{2} \right\rfloor$.}
\end{exm}

\vspace{2mm} \noindent {\footnotesize
\begin{minipage}[b]{15cm}
Mircea Cimpoea\c s, Simion Stoilow Institute of Mathematics, Research unit 5, P.O.Box 1-764,\\
Bucharest 014700, Romania\\
E-mail: mircea.cimpoeas@imar.ro
\end{minipage}}
\end{document}